\newtheorem{theorem}{Theorem}
\newtheorem{assumption}{Assumption}
\newtheorem{lemma}{Lemma}
\newtheorem{corollary}{Corollary}
  \def\vw{{\bf w}} \def\vx{{\bf x}}
\def\vy{{\bf y}}
\def \bxi{\boldsymbol{\zeta}}
\def \bxi{\boldsymbol{\xi}}
\def \e{\varepsilon}
\def \R{\mathbb{R}}
\def \dx{\,dx}
\def \E{\mathbb{E}}
\def \P{\mathbb{P}}
\def \ones{\textbf{1}}
\def \lambdaMin{\lambda_{\text{min}}}
\def \P{\mathbb{P}}
\def \calF{\mathcal{F}}
\title{On Distributed Stochastic Gradient Algorithms\\ for Global Optimization}
\name{Brian Swenson, Anirudh Sridhar and H. Vincent Poor
                       \thanks{This work was partially supported by the Air Force Office of Scientific Research under MURI Grant FA9550-18-1-0502. }}
                 \address{Department of Electrical Engineering, Princeton University\\
                 Email: \{bswenson, anirudhs, poor\}@princeton.edu}
\begin{document}
\maketitle
\pagenumbering{gobble}

\renewcommand{\thefootnote}{\fnsymbol{footnote}}

\renewcommand{\thefootnote}{\arabic{footnote}}

\begin{abstract}
The paper considers the problem of network-based computation of global minima in smooth nonconvex optimization problems. It is known that distributed gradient-descent-type algorithms can achieve convergence to the set of global minima by adding slowly decaying Gaussian noise in order escape local minima. However, the technical assumptions under which convergence is known to occur can be restrictive in practice. In particular, in known convergence results, the local objective functions possessed by agents are required to satisfy a highly restrictive bounded-gradient-dissimilarity condition. The paper demonstrates convergence to the set of global minima while relaxing this key assumption.
\end{abstract}

\begin{keywords}
Distributed Optimization, nonconvex optimization, global optimization
\end{keywords}

\thispagestyle{plain}
\markboth{}{}

\section{Introduction}
In this paper we are interested in minimizing the nonconvex function 
\begin{equation} \label{eq:U-def}
U(x) = \sum_n U_n(x),
\end{equation}
where each $U_n:\R^d\to\R$ is smooth but possibly nonconvex. Optimization problems of this form are readily found in applications in machine learning and signal processing (see \cite{di2016next,nedic2009distributed,rabbat2004distributed}  and references therein).

In typical machine learning applications, such as empirical risk minimization, the objective function is naturally decomposed into the sum form \eqref{eq:U-def} by separating data into $N$ sets to generate each subfunction $U_n$.
In applications such as the internet of things (IoT) and sensor networks, data is inherently distributed among nodes of a network, giving rise to a natural decomposition of the form \eqref{eq:U-def}.
This motivates the development of distributed (i.e., network-based) algorithms for nonconvex optimization 
where data is stored locally at nodes of a network, and nodes (or agents) exchange algorithm-relevant information only with neighboring agents via an overlaid communication graph.

Much recent work on nonconvex optimization has focused on computation of local minima; e.g., \cite{murray2019revisiting,lee2016gradient,lee2017first,dauphin2014identifying} study convergence to local minima in centralized settings, and \cite{bianchi2012convergence,swenson2019distributed,di2016next,sun2016distributed,lu2019pa,tatarenko2017non},  study convergence to local minima (or critical points) in distributed settings.

A popular gradient-based technique for computing global minima of nonconvex functions consists of using gradient descent dynamics plus appropriately controlled Gaussian noise used for exploration \cite{welling2011bayesian,gidas1985global}. Such dynamics are often referred to as Langevin dynamics. It has been well established that these dynamics, in their classical form, converge asymptotically to global optima \cite{chiang1987diffusion,gelfand1991recursive,gidas1985global}. Motivated by applications in machine learning, recent work has focused on characterizing various aspects of Langevin dynamics including hitting time to local minima \cite{zhang2017hitting}, as well as local minima escape time and recurrence time \cite{tzen2018local,gao2018breaking}. Recent results in \cite{raginsky2017non,xu2018global} give finite time convergence guarantees for the Langevin dynamics in nonconvex problems.

In this paper we consider the following distributed algorithm for global optimization of \eqref{eq:U-def}
\begin{align} \label{eq:update-simple}
\vx_n(t+1) = & \vx_n(t) - \beta_t\sum_{\ell\in\Omega_n} (\vx_n(t) - \vx_\ell(t))\\
& \quad \quad - \alpha_t \nabla U_n(\vx_n(t)) + \gamma_t\vw_n(t).
\end{align}
These dynamics were originally introduced in \cite{swenson2019annealing} (see also \cite{swenson2019distributed}), and may be viewed as a distributed variant of the (centralized) global optimization algorithm considered in \cite{gelfand1991recursive}. More generally, these dynamics may be viewed as a distributed discrete-time analog of the continuous-time Langevin diffusion \cite{swenson2019annealing,chiang1987diffusion,gelfand1991recursive}. 
The work differs from current work on distributed nonconvex optimization, including \cite{bianchi2012convergence,swenson2019distributed,di2016next,sun2016distributed,tatarenko2017non,lu2019pa}, in that we study convergence to global optima, and more generally, in that we study distributed Langevin-type dynamics which have been shown to have desirable nonasymptotic properites in centralized settings \cite{zhang2017hitting,tzen2018local,gao2018breaking,raginsky2017non,xu2018global}.\footnote{We note that the process \eqref{eq:update-simple} uses decaying weight parameters, while recent work \cite{zhang2017hitting,tzen2018local,gao2018breaking,raginsky2017non,xu2018global} generally considers fixed-weight-parameter processes.}

While the results of \cite{swenson2019annealing} are promising, convergence of \eqref{eq:update-simple} to the set of global minima of $U$ was demonstrated only under restrictive assumptions.
In particular, it was assumed in \cite{swenson2019annealing} that agents' local objective functions satisfy the following \emph{bounded-gradient-dissimilarity} condition (see Assumption 2 in \cite{swenson2019annealing}):
$$
\sup_{x\in\R^d}\|\nabla U_n(x) - \nabla U(x)\| < \infty, \quad \forall  n=1,\ldots,N.
$$
This assumption, though asymptotic in nature, is highly restrictive; e.g., it is violated in the simple case that $d=1$ and $U_n(x) = c_nx^2$, $c_n\in\R$, $c_n\not= c_\ell$, for some $ n,\ell \in\{1,\ldots,N\}.$

The main result of this paper is to demonstrate convergence of \eqref{eq:update-simple} while relaxing this key assumption. In particular, in lieu of assuming bounded gradient-dissimilarity, we will assume the much weaker condition that each agent's local objective function is individually coercive and the gradient is radially nondecreasing (see Assumption \ref{ass:coercive} below). Aside from changing this key assumption, we retain the remaining assumptions used in \cite{swenson2019annealing} and prove convergence in probability to the set of global minima.\footnote{We note that, to simplify the presentation, we do assume here that the communication graph is time invariant (Assumption \ref{ass:conn}). However, the analysis readily extends to the time-varying case.} The main result of the paper is found in Theorem \ref{thrm:main-result}.

The remainder of the paper is organized as follows: Section \ref{sec:main-result} presents our assumptions and main result, and Section \ref{sec:conv-analysis} provides the convergence analysis for our main result. We remark that in the paper we use standard notational conventions, identical to those used in \cite{swenson2019annealing}.

\section{Assumptions and Main Result} \label{sec:main-result}
We now review our assumptions and present our main result.

We will assume that the local objective functions $U_n$, $n=1,\ldots,N$ satisfy the following two assumptions.
\begin{assumption}
\label{ass:Lip} $U_n(\cdot)$ is $C^2$ and has Lipschitz continuous gradient, i.e., there exists $K>0$ such that
\begin{align}
\left\|\nabla U_{n}(x)-\nabla U_{n}(\acute{\mathbf{x}})\right\|\leq K\left\|x-\acute{x}\right\|, \quad \forall n.
\end{align}
\end{assumption}

\begin{assumption} \label{ass:coercive}
$U_n(\cdot)$ is coercive, i.e.,
$$U_n(x)\to\infty \mbox{ as } \|x\|\to\infty.$$
Moreover, for some constant $C_1>0$, the gradient of $U_n$ satisfies $\langle x, \nabla U_n(x) \rangle \geq 0$ for $\|x\| \ge C_1$.
\end{assumption}

Assumptions \ref{ass:GM_1}--\ref{ass:GM_3} pertain to the sum function \eqref{eq:U-def}.
\begin{assumption}
\label{ass:GM_1} $U:\R^d\to\R$ satisfies
\begin{enumerate}
\item[(i)] $\min_x U(x) = 0$,
\item[(ii)] $\inf_x (|\nabla U(x)|^2 - \Delta U(x) ) > -\infty$.
\end{enumerate}
\end{assumption}
\begin{assumption}
\label{ass:GM_2} For $\e>0$ let
$$
d\pi^\e(x) = \frac{1}{Z^\e}\exp\left(-\frac{2U(x)}{\e^2} \right)\dx,
$$
where
$
Z^\e= \int\exp\left(-\frac{2U(x)}{\e^2} \right)\dx,
$
where $d\pi^\e$ denotes the Radon-Nikodym derivative of $\pi^\e$ taken with respect to the Lebesgue measure.
Assume
$U$ is such that $\pi^\e$ has a weak limit $\pi$ as $\e\to 0$.
\end{assumption}
\begin{assumption}
\label{ass:GM_3}
The gradient $\nabla U(x)$ satisfies the following conditions
\begin{itemize}
  \item [(i)] $\liminf_{\|x\|\to\infty}\langle \frac{\nabla U(x)}{\|\nabla U(x)\|}, \frac{x}{\|x\|} \rangle \geq C(d)$,
$C(d) = \left( \frac{4d-4}{4d-3} \right)^{\frac{1}{2}}$
  \item [(ii)] $\liminf_{\|x\|\to\infty} \frac{\|\nabla U(x)\|}{\|x\|} > 0$
  \item [(iii)] $\limsup_{\|x\|\to\infty} \frac{\|\nabla U(x)\|}{\|x\|} < \infty$
\end{itemize}
\end{assumption}

The remaining assumptions pertain to the process \eqref{eq:update}. For $t\geq 1$, let $\vx_t$ denote the $Nd$-dimensional vector stacking $(\vx_n(t))_{n=1}^N$, and let $\calF_{t} := \sigma(\{\vx_{s},\}_{s=1}^t,\{\bxi_s,\vw_s\}_{s=1}^{t-1},)$ so that $\{\calF_{t}\}$ denotes the natural filtration associated with \eqref{eq:update-simple}.

\begin{assumption}
\label{ass:conn}
There exists a communication graph $G = (V,E)$ over which agents may exchange information with neighboring agents. The graph $G$ is undirected and connected.
\end{assumption}
\begin{assumption}
\label{ass:grad_noise} The sequence $\{\bxi_{t}\}$ is $\{\mathcal{F}_{t+1}\}$-adapted and there exists a constant $B>0$ such that
\begin{align}
\label{ass:grad_noise1}
\mathbb{E}[\bxi_{t}~|~\calF_{t}]=0~~\mbox{and}~~\mathbb{E}[
\|\bxi_{t}\|^{2}~|~\calF_{t}]<B
\end{align}
for all $t\geq 0$.
\end{assumption}
\begin{assumption}
\label{ass:gauss}
For each $n$, the sequence $\{\mathbf{w}_{n}(t)\}$ is a sequence of i.i.d. $d$-dimensional standard Gaussian vectors with covariance $I_d$ and with $\mathbf{w}_{n}(t)$  being independent of $\calF_{t}$ for all $t$. Further, the sequences $\{\mathbf{w}_{n}(t)\}$ and $\{\mathbf{w}_{l}(t)\}$ are mutually independent for each pair $(n,l)$ with $n\neq l$.
\end{assumption}
\begin{assumption}
\label{ass:weights} The sequences $\{\alpha_{t}\}$, $\{\beta_{t}\}$, and $\{\gamma_{t}\}$ satisfy
\begin{align}
\label{eq:weights}
\alpha_{t}=\frac{c_{\alpha}}{t},~~\beta_{t}=\frac{c_{\beta}}{t^{\tau_{\beta}}},~~\gamma_{t}=\frac{c_{\gamma}}{t^{1/2}\sqrt{\log\log t}},~~~\mbox{for $t$ large},
\end{align}
where $c_{\alpha},c_{\beta},c_{\gamma}>0$ and $\tau_{\beta}\in (0,1/2)$.
\end{assumption}

Assumptions \ref{ass:Lip}--\ref{ass:coercive} ensure that agents can reach consensus. Assumptions \ref{ass:GM_1}--\ref{ass:GM_3} ensure that a global minimum of $U$ can be found by the annealing process. Assumptions \ref{ass:conn}--\ref{ass:weights} ensure that algorithmic parameters, including noise terms, weight parameters, and graph connectivity, are adequately chosen.

The key novelty of the paper lies in assuming Assumption \ref{ass:coercive} rather than bounded gradient dissimilarity (Assumption 2 in \cite{swenson2019annealing}). 
Assumption \ref{ass:GM_3} assumes similar conditions hold for the gradient of the \emph{sum} function, and is required to obtain convergence to global minima. Assumption \ref{ass:coercive} is only a slight strengthening of this assumption, applied individually to each local objective function. 

\subsection{Main Convergence Result}

Let
\begin{equation} \label{eq:x-bar-def}
\bar \vx_t := \frac{1}{N} \sum_{n=1}^N \vx_n(t).
\end{equation}

\begin{theorem}\label{thrm:main-result}
Suppose Assumptions \ref{ass:Lip}--\ref{ass:weights} hold and let $\{\vx_n(t)\}$ satisfy \eqref{eq:update-simple}, with initial condition $\vx_n(1) = x_{1,n}$, $n=1,\ldots,N$.
Suppose further that $c_{\alpha}$ and $c_{\gamma}$ in Assumption~\ref{ass:weights} satisfy $c_{\gamma}^{2}/c_{\alpha}>C_{0}$, where $C_0$ is defined after (2.3) in \cite{gelfand1991recursive}.
Then, for any bounded continuous function $f:\mathbb{R}^{d}\to\mathbb{R}$ and for all $n=1,\ldots,N$, we have that
\begin{align}
\label{lm:avg_conv1}
\lim_{t\to\infty} \E(f(\vx_n(t))\vert \vx_n(1) = x_{1,n}) = \int f(x)d\pi(x),
\end{align}
where $\pi$ is as defined in Assumption \ref{ass:GM_1}.
\end{theorem}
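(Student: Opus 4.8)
The plan is to exploit the separation of time scales built into the weight schedule of Assumption~\ref{ass:weights}: since $\tau_\beta\in(0,1/2)$ one has $\alpha_t/\beta_t\to0$ and $\gamma_t/\beta_t\to0$, so the consensus force acts on a strictly faster effective time scale than either the gradient drift or the injected noise. I would therefore (i) show that the agents reach consensus, i.e.\ that the disagreement $\vx_n(t)-\bar\vx_t\to0$; (ii) show that the average $\bar\vx_t$ obeys, up to asymptotically negligible perturbations, the centralized recursive annealing recursion of \cite{gelfand1991recursive} for the sum objective $U$; and (iii) invoke that classical result together with consensus to transfer the weak limit $\pi$ to each individual agent. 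The decomposition $\vx_n(t)=\bar\vx_t+(\vx_n(t)-\bar\vx_t)$ then yields \eqref{lm:avg_conv1} by Slutsky's theorem.

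The crux of the proof, and the main obstacle, is controlling the growth of the iterates, and this is the sole place where Assumption~\ref{ass:coercive} replaces the bounded-gradient-dissimilarity condition of \cite{swenson2019annealing}. I would split the Lyapunov function $\sum_n\|\vx_n(t)\|^2 = N\|\bar\vx_t\|^2 + D_t$, with disagreement energy $D_t := \sum_n\|\vx_n(t)-\bar\vx_t\|^2$, and analyze the two pieces \emph{jointly}. The stacked consensus term equals $-\beta_t\,\vx_t^\top(L\otimes I_d)\vx_t\le0$, where $L\succeq0$ is the graph Laplacian (Assumption~\ref{ass:conn}), and it contracts $D_t$ at rate $\beta_t\lambda_2(L)$ with $\lambda_2(L)>0$ the algebraic connectivity. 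The gradient contribution is $-\alpha_t\sum_n\langle\vx_n(t),\nabla U_n(\vx_n(t))\rangle$, and here Assumption~\ref{ass:coercive} is decisive: for any agent with $\|\vx_n(t)\|\ge C_1$ the corresponding summand is nonnegative, so no single agent's local gradient can drive it outward, while inside the ball the term is controlled via the Lipschitz bound (Assumption~\ref{ass:Lip}). This is exactly the per-agent stabilization the bounded-dissimilarity assumption previously supplied, and it is what prevents one agent from escaping to infinity faster than the consensus force can pull it back when the local gradients conflict. The strict restoring force needed to keep $\bar\vx_t$ tight then comes from the sum conditions: once $D_t$ is small, $\tfrac1N\langle\bar\vx_t,\sum_n\nabla U_n(\vx_n(t))\rangle\approx\tfrac1N\langle\bar\vx_t,\nabla U(\bar\vx_t)\rangle\gtrsim\|\bar\vx_t\|^2$ for $\|\bar\vx_t\|$ large by Assumption~\ref{ass:GM_3}(i)--(ii). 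I expect this step to be the hard part for two reasons. First, the two estimates are coupled — tightness of $D_t$ requires the gradients, hence the average, to be under control, while the restoring force on $\bar\vx_t$ becomes available only once $D_t$ is small — so a simultaneous (bootstrap) drift inequality on $N\|\bar\vx_t\|^2+D_t$ is needed rather than two independent arguments. Second, the annealing noise is not square-summable ($\sum_t\gamma_t^2=\infty$), so tightness cannot follow from a summable-perturbation bound; it must be obtained by showing that outside a fixed compact set the coercive/restoring drift strictly dominates the decaying noise injection, yielding a Foster--Lyapunov-type drift condition uniform in $t$. The martingale terms — the gradient noise $\bxi_t$ (Assumption~\ref{ass:grad_noise}) and the annealing noise $\vw_n(t)$ (Assumption~\ref{ass:gauss}) — drop out of the conditional drift to first order and contribute only second-order terms of order $\alpha_t^2+\gamma_t^2$, absorbed for $t$ large.

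With tightness established, consensus follows from a standard contraction estimate: projecting onto the subspace orthogonal to the consensus direction, $D_t$ is contracted at rate $\beta_t\lambda_2(L)$ while the gradient and noise perturbations enter with coefficients $\alpha_t$ and $\gamma_t$; since the gradients are now bounded along the trajectory (via Assumption~\ref{ass:Lip} and Step~1) and $\alpha_t/\beta_t,\gamma_t/\beta_t\to0$, these perturbations are asymptotically dominated and $\E\,D_t\to0$.

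Finally, averaging \eqref{eq:update-simple} over $n$ annihilates the consensus term (the Laplacian kills the all-ones direction), giving $\bar\vx_{t+1}=\bar\vx_t-\tfrac{\alpha_t}{N}\sum_n\nabla U_n(\vx_n(t))+\tfrac{\gamma_t}{N}\sum_n\vw_n(t)$. Replacing $\nabla U_n(\vx_n(t))$ by $\nabla U_n(\bar\vx_t)$ costs at most $K\|\vx_n(t)-\bar\vx_t\|$ (Assumption~\ref{ass:Lip}), negligible by consensus, and $\sum_n\nabla U_n(\bar\vx_t)=\nabla U(\bar\vx_t)$; moreover $\tfrac1N\sum_n\vw_n(t)$ is Gaussian with covariance $\tfrac1N I_d$ by Assumption~\ref{ass:gauss}. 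Thus $\bar\vx_t$ satisfies exactly the recursive annealing recursion of \cite{gelfand1991recursive} for $U$, with effective step size of order $c_\alpha/(Nt)$ and noise schedule matched so that the annealing condition reads $c_\gamma^2/c_\alpha>C_0$. Under Assumptions~\ref{ass:GM_1}--\ref{ass:GM_3}, which are precisely the hypotheses of that result, and after checking that the accumulated disagreement and gradient-noise errors do not perturb the weak limit, I conclude $\bar\vx_t\Rightarrow\pi$; consensus and Slutsky's theorem then give $\vx_n(t)\Rightarrow\pi$ and hence \eqref{lm:avg_conv1}.
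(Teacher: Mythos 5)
Your three-step plan (consensus, then the average behaves like the centralized annealing recursion, then transfer to each agent via Slutsky) is exactly the architecture of the paper's proof, and you correctly identify iterate control as the place where Assumption \ref{ass:coercive} replaces bounded gradient dissimilarity. The genuine gap is in how you propose to carry out that step: you set up a joint Foster--Lyapunov/bootstrap drift on $N\|\bar\vx_t\|^2+D_t$ aiming at \emph{tightness}, flag the two obstacles yourself (the coupled estimates and $\sum_t\gamma_t^2=\infty$), and leave them unresolved. The paper's key observation is that tightness is never needed for consensus; a much weaker almost-sure \emph{growth bound} suffices. Its Lemma \ref{lemma:boundedness} shows $\sup_{t\geq1}\|\vx_t\|/t^{\eta}<\infty$ a.s.\ for every $\eta>1/2$, by a one-line use of Assumption \ref{ass:coercive} (namely $\langle\vx_t,\nabla\hat U(\vx_t)\rangle\geq-NKC_1$, since each summand is nonnegative outside the ball of radius $C_1$ and Lipschitz-bounded inside it) followed by a supermartingale/Doob argument applied to $\|\vx_t\|^2/t^{\eta}$ minus a summable compensator. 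Then, in the consensus step, the gradient perturbation is $\alpha_t\|\nabla\hat U(\vx_t)\|=O(t^{\eta-1})$ a.s., and because $\tau_\beta<1/2$ one may pick $\eta\in(1/2,1-\tau_\beta)$ so that the Laplacian contraction at rate $\beta_t\sim t^{-\tau_\beta}$ dominates; Lemma \ref{lm:mean-conv} (Lemma 4.3 of \cite{kar2013distributed}) then yields $t^{\tau}\|\vx_n(t)-\bar\vx_t\|\to0$ a.s.\ for all $\tau\in[0,1/2-\tau_\beta)$. In short, the hard tightness problem you describe is deliberately bypassed by the paper, not solved, and your proposal does not solve it either.

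This matters for your final step as well. After averaging, $\bar\vx_t$ does \emph{not} satisfy the Gelfand--Mitter recursion ``exactly'': it carries the perturbation $\frac{\alpha_t}{N}\sum_n\left(\nabla U_n(\vx_n(t))-\nabla U_n(\bar\vx_t)\right)$ plus the averaged gradient noise $\bxi_t$, and showing that these do not destroy the weak limit is the real content of the transfer. The paper outsources this to Lemma \ref{lemma-conv-to-min} (Lemma 5 of \cite{swenson2019annealing}), whose proof consumes precisely the almost-sure polynomial consensus rate produced by Lemma \ref{lemma-consensus}. Your consensus conclusion, $\E D_t\to0$ with no rate and only in expectation, is too weak for this purpose: in the window-comparison arguments underlying \cite{gelfand1991recursive}, the injected annealing noise accumulated near time $t$ has magnitude on the order of $1/\sqrt{\log\log t}$, so the accumulated disagreement error must vanish at a definite rate to be absorbed, and an un-rated limit does not provide that. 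So either prove consensus with an a.s.\ polynomial rate (which the growth-bound route gives you essentially for free) or invoke the perturbed-average lemma of \cite{swenson2019annealing} rather than the clean centralized theorem of \cite{gelfand1991recursive}; as written, the last step of your plan does not close.
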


The following is an immediate corollary of Theorem \ref{thrm:main-result}.
\begin{corollary}
Suppose the hypotheses of Theorem \ref{thrm:main-result} hold. Then for $n=1,\ldots,N$, $\vx_n(t)$ converges in probability to the set of global minima of $U$, i.e., $\lim_{t\to\infty} \P(\vx_n(t)\in S) = 1$.
\end{corollary}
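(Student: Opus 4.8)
The plan is to combine the weak-convergence statement of Theorem~\ref{thrm:main-result} with the concentration of the Gibbs family $\pi^\e$ in the zero-temperature limit. First I would identify the support of the limiting measure $\pi$. Write $S := \{x : U(x) = 0\}$ for the set of global minimizers, which are global because $\min_x U = 0$ by Assumption~\ref{ass:GM_1}(i). Since each $U_n$ is coercive, and hence bounded below, by Assumption~\ref{ass:coercive}, the sum $U$ is itself coercive; as $U$ is also continuous ($C^2$ by Assumption~\ref{ass:Lip}), the set $S = U^{-1}(0)$ is closed and bounded, hence compact and nonempty. For the family $\pi^\e$ of Assumption~\ref{ass:GM_2}, the density is proportional to $\exp(-2U(x)/\e^2)$, which as $\e \to 0$ concentrates all of its mass on arbitrarily small neighborhoods of $S$; this is the standard Laplace-type zero-temperature concentration underlying simulated annealing \cite{gelfand1991recursive,gidas1985global,chiang1987diffusion}. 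It follows that the weak limit $\pi$ guaranteed by Assumption~\ref{ass:GM_2} satisfies $\pi(S) = 1$.

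Next I would pass from weak convergence to convergence in probability to the set $S$. Fix $\delta > 0$ and let $S_\delta := \{x : d(x,S) < \delta\}$, so that $S_\delta^c$ is closed. Theorem~\ref{thrm:main-result} states that $\E(f(\vx_n(t)) \mid \vx_n(1) = x_{1,n}) \to \int f\,d\pi$ for every bounded continuous $f$, i.e., the conditional law of $\vx_n(t)$ converges weakly to $\pi$. Applying the portmanteau theorem to the closed set $S_\delta^c$ yields
\begin{equation}
\limsup_{t\to\infty}\, \P(\vx_n(t) \in S_\delta^c) \;\leq\; \pi(S_\delta^c) \;=\; 0,
\end{equation}
where the last equality uses $S \subseteq S_\delta$ together with $\pi(S) = 1$. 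Hence $\P(\vx_n(t) \in S_\delta) \to 1$ for every $\delta > 0$, which is precisely the assertion that $\vx_n(t)$ converges in probability to the set $S$.

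The substantive step is the first one, establishing $\pi(S) = 1$; the portmanteau passage is routine once the support of $\pi$ is known. The main obstacle is therefore the zero-temperature concentration: one must show $\pi^\e(S_\delta^c) \to 0$ strongly enough that the property survives the weak limit $\e \to 0$. Coercivity of $U$ together with the growth conditions of Assumption~\ref{ass:GM_3} supply exactly the tail control on $\exp(-2U/\e^2)$ needed for the Laplace estimate, so I expect this to follow from the arguments of \cite{gelfand1991recursive,gidas1985global}. One caveat: the displayed identity $\P(\vx_n(t)\in S)\to 1$ should be read in the neighborhood sense above unless $S$ is a $\pi$-continuity set (for instance a finite collection of isolated minimizers), in which case the literal statement follows as well.
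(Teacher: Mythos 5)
Your proof is correct in substance but takes a genuinely different route from the paper's. The paper's own proof approximates the indicator $\ones_{S}$ pointwise by bounded continuous functions $f_i$, applies Theorem \ref{thrm:main-result} to each $f_i$, and then exchanges the limits in $i$ and $t$, citing Fubini's theorem; it never establishes $\pi(S)=1$, which is used implicitly in identifying $\lim_i \int f_i\,d\pi$ with $1$. Your argument instead makes two steps explicit: (i) the zero-temperature Laplace concentration of $\pi^\e$, combined with portmanteau in the limit $\e\to 0$ (namely $\pi(\overline{S_\delta}) \geq \limsup_{\e\to 0}\pi^\e(S_\delta) = 1$ for every $\delta>0$, then $\delta\downarrow 0$ using that $S$ is closed), gives $\pi(S)=1$; and (ii) portmanteau applied to the closed set $S_\delta^c$ converts the weak convergence of Theorem \ref{thrm:main-result} into $\P(\vx_n(t)\in S_\delta)\to 1$. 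What your route buys is rigor: the paper's limit exchange is not actually justifiable in the typical case. Indeed, when $S$ is a finite set of isolated minimizers, the law of $\vx_n(t)$ is absolutely continuous (non-degenerate Gaussian noise is injected at every step), so $\P(\vx_n(t)\in S)=0$ for all $t$, and the literal displayed identity fails; only the neighborhood version, which is what your proof delivers and what ``convergence in probability to a set'' properly means, can hold.

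One error in your closing caveat, though it cuts in your favor: a finite collection of isolated minimizers is \emph{not} a $\pi$-continuity set. In that case $S$ has empty interior, so $\partial S = S$ and $\pi(\partial S) = \pi(S) = 1 \neq 0$; this is precisely the situation where the literal statement breaks down, not where it holds. For the literal identity to follow from portmanteau one would need $\pi(\partial S)=0$, which given $\pi(S)=1$ forces $\pi(\mathrm{int}\,S)=1$ --- an atypical situation. So your caveat is right in spirit, but the parenthetical example points the wrong way; the clean resolution is simply to state and prove the corollary in the neighborhood sense, as the body of your argument does.
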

\begin{proof}

Let $S := \arg\min_{x\in\R^d} U(x)$.
Let $f_i:\R^d\to\R$ be a sequence of continuous functions converging pointwise to the indicator function on $S$, denoted as $\ones_{S}$. Fix $n\in\{1,\ldots,N\}$ and let $\P_t$ denote the probability measure induced by $\vx_n(t)$ over $\R^d$; i.e., for a Borel set $B$ of $\R^d$, $\P_t(B)$ indicates the probability that $\vx_n(t)$ lies in $B$.
By Theorem \ref{thrm:main-result} we have $\lim_{t\to\infty} \int f_i(x) d\P_t(x) = \int f_i(x)d\pi(x).$
Taking the limit as $i\to\infty$ on both sides and then exchanging the order of the limits (justified by Fubini's theorem \cite{williams1991probability}) we get the desired result.
\end{proof}

\section{Convergence Analysis} \label{sec:conv-analysis}
In this section we will prove Theorem \ref{thrm:main-result}. 
Due to the relaxation of the bounded-gradient-dissimilarity assumption (Assumption 2 in \cite{swenson2019annealing}), the techniques used to prove convergence to consensus in \cite{swenson2019annealing} are no longer applicable and an alternative approach must be taken. This is the main challenge in proving Theorem \ref{thrm:main-result} under our relaxed assumptions. In Lemmas \ref{lemma:boundedness}--\ref{lemma-consensus} we prove that \eqref{eq:update-simple} achieves consensus. Given Lemma \ref{lemma-consensus}, Theorem \ref{thrm:main-result} readily follows from the techniques developed in \cite{swenson2019annealing}. In particular, Theorem \ref{thrm:main-result} follows immediately from Lemma \ref{lemma-consensus} below and Lemma 5 in \cite{swenson2019annealing} (see Lemma \ref{lemma-conv-to-min} below). 

Before proving convergence of \eqref{eq:update-simple} to consensus, we recall the following result from~\cite{kar2013distributed} that will be useful in our proof techniques.

\begin{lemma}[Lemma 4.3 in~\cite{kar2013distributed}]
\label{lm:mean-conv} Let $\{\mathbf{z}_{t}\}$ be an $\mathbb{R}_{+}$ valued $\{\calF_{t}\}$ adapted process that satisfies
\begin{equation}
\label{lm:mean-conv1}
\mathbf{z}_{t+1}\leq \left(1-r_{1}(t)\right)\mathbf{z}_{t}+r_{2}(t)V_{t}\left(1+J_{t}\right).
\end{equation}
In the above, $\{r_{1}(t)\}$ is an $\{\calF_{t+1}\}$ adapted process, such that for all $t$, $r_{1}(t)$ satisfies $0\leq r_{1}(t)\leq 1$ and
\begin{equation}
\label{lm:JSTSP2}
\frac{a_{1}}{(t+1)^{\delta_{1}}}\leq\mathbb{E}\left[r_{1}(t)~|~\calF_{t}\right]\leq 1
\end{equation}
with $a_{1}>0$ and $0\leq \delta_{1}< 1$. The sequence $\{r_{2}(t)\}$ is deterministic, $\mathbb{R}_{+}$ valued and satisfies $r_{2}(t)\leq a_{2}/(t+1)^{\delta_{2}}$ with $a_{2}>0$ and $\delta_{2}>0$.
Further, let $\{V_{t}\}$ and $\{J_{t}\}$ be $\mathbb{R}_{+}$ valued $\{\calF_{t+1}\}$ adapted processes with $\sup_{t\geq 0}\|V_{t}\|<\infty$ a.s. The process $\{J_{t}\}$ is i.i.d.~with $J_{t}$ independent of $\calF_{t}$ for each $t$ and satisfies the moment condition $\mathbb{E}\left[\left\|J_{t}\right\|^{2+\varepsilon_{1}}\right]<\kappa<\infty$ for some $\varepsilon_{1}>0$ and a constant $\kappa>0$. Then, for every $\delta_{0}$ such that
\begin{equation}
\label{lm:mean-conv5}
0\leq\delta_{0}<\delta_{2}-\delta_{1}-\frac{1}{2+\varepsilon_{1}},
\end{equation}
we have $(t+1)^{\delta_{0}}\mathbf{z}_{t}\rightarrow 0$ a.s. as $t\rightarrow\infty$.
\end{lemma}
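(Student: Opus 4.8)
The plan is to upgrade a mean-rate estimate to an almost-sure rate by controlling the $(2+\varepsilon_1)$-th moment of $\mathbf{z}_t$: the exponent $\tfrac{1}{2+\varepsilon_1}$ in the conclusion is exactly the loss one pays in passing from $L^p$ control to a pathwise statement via Borel--Cantelli, which signals that $p := 2+\varepsilon_1$ is the right moment to track. First I would reduce to an \emph{equality} recursion, defining $\bar{\mathbf{z}}_t$ by $\bar{\mathbf{z}}_{t+1} = (1-r_1(t))\bar{\mathbf{z}}_t + r_2(t)V_t(1+J_t)$ with $\bar{\mathbf{z}}_0 = \mathbf{z}_0$, so that $\mathbf{z}_t \le \bar{\mathbf{z}}_t$ pointwise and it suffices to show $(t+1)^{\delta_0}\bar{\mathbf{z}}_t \to 0$ a.s. Since $\sup_t\|V_t\|$ is only almost surely finite, I would localize: replace $V_t$ by its truncation $V_t\wedge M$, prove the claim for the truncated process (on which $V_t\le M$ deterministically), and recover the full statement by letting $M\to\infty$ along $M\in\mathbb{N}$, noting that the truncated and original recursions coincide on $\{\sup_t\|V_t\|\le M\}$, an event whose probability tends to $1$.

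Set $u_t := (\mathbb{E}[\bar{\mathbf{z}}_t^{\,p}])^{1/p}$. Because every term on the right of the recursion is nonnegative, Minkowski's inequality gives $u_{t+1} \le \|(1-r_1(t))\bar{\mathbf{z}}_t\|_p + r_2(t)\,\|V_t(1+J_t)\|_p$. For the first term I would use $(1-r_1(t))^p \le 1-r_1(t)$ (valid since $1-r_1(t)\in[0,1]$ and $p\ge 1$), then tower the hypothesis $\mathbb{E}[r_1(t)\mid\calF_t]\ge a_1(t+1)^{-\delta_1}$ against the $\calF_t$-measurable factor $\bar{\mathbf{z}}_t^{\,p}$, obtaining $\|(1-r_1(t))\bar{\mathbf{z}}_t\|_p \le (1-\tfrac{a_1}{p}(t+1)^{-\delta_1})\,u_t$ after a $p$-th root and the elementary bound $(1-x)^{1/p}\le 1-x/p$. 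For the second term, boundedness of $V_t$ together with the i.i.d.\ moment bound $\mathbb{E}[\|J_t\|^{p}]<\kappa$ (and independence of $J_t$ from $\calF_t$) yields $\|V_t(1+J_t)\|_p \le M(1+\kappa^{1/p})$, so that term is $O((t+1)^{-\delta_2})$.

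This produces the scalar deterministic recursion $u_{t+1} \le (1-a_1'(t+1)^{-\delta_1})\,u_t + a_2'(t+1)^{-\delta_2}$, to which I would apply the standard polynomial-decay lemma for such recursions (this is where $\delta_1<1$ is used, guaranteeing $\sum (t+1)^{-\delta_1}=\infty$ so the homogeneous part decays stretched-exponentially) to conclude $u_t = O((t+1)^{-(\delta_2-\delta_1)})$, i.e.\ $\mathbb{E}[\bar{\mathbf{z}}_t^{\,p}] = O((t+1)^{-p(\delta_2-\delta_1)})$. Then, for fixed $\eta>0$, Markov's inequality gives $\mathbb{P}((t+1)^{\delta_0}\bar{\mathbf{z}}_t>\eta) \le \eta^{-p}(t+1)^{p\delta_0}\mathbb{E}[\bar{\mathbf{z}}_t^{\,p}] = O((t+1)^{-p(\delta_2-\delta_1-\delta_0)})$, which is summable in $t$ precisely when $\delta_0 < \delta_2-\delta_1-\tfrac{1}{p} = \delta_2-\delta_1-\tfrac{1}{2+\varepsilon_1}$, the stated hypothesis. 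The first Borel--Cantelli lemma then shows $(t+1)^{\delta_0}\bar{\mathbf{z}}_t \le \eta$ for all large $t$ a.s., and letting $\eta\downarrow 0$ along a sequence gives $(t+1)^{\delta_0}\bar{\mathbf{z}}_t\to 0$ a.s.

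I expect the main obstacle to be the bookkeeping around the almost-sure (rather than uniform) bound on $V_t$, which forces the localization step and requires care that truncation does not disturb the moment bounds on $J_t$; a secondary subtlety is the reduction $(1-r_1(t))^p\le 1-r_1(t)$, which is what allows the purely conditional-mean hypothesis on the random coefficient $r_1(t)$ to pass cleanly through the $p$-th power. The remaining deterministic recursion estimate and the mean-to-almost-sure Borel--Cantelli upgrade are routine.
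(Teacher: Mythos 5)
The paper itself offers no proof of this statement: it is imported verbatim as Lemma 4.3 of \cite{kar2013distributed} and used as a black box, so there is no in-paper argument to compare yours against; your proposal has to be judged on its own merits. On those merits the skeleton is sound. The comparison with the equality recursion is valid because $1-r_{1}(t)\geq 0$; the truncation of $V_{t}$ and passage $M\to\infty$ is a correct localization (the truncated and untruncated paths coincide on $\{\sup_{t}V_{t}\leq M\}$, and these events exhaust a set of full measure); the $L^{p}$ computation with $p=2+\varepsilon_{1}$ is right --- Minkowski, then $(1-r_{1}(t))^{p}\leq 1-r_{1}(t)$, the tower property against the $\mathcal{F}_{t}$-measurable $\bar{\mathbf{z}}_{t}^{\,p}$, and $(1-x)^{1/p}\leq 1-x/p$ give the contraction factor $1-\tfrac{a_{1}}{p}(t+1)^{-\delta_{1}}$, while $V_{t}\leq M$ and $\mathbb{E}[J_{t}^{p}]<\kappa$ bound the forcing term by $O((t+1)^{-\delta_{2}})$. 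The deterministic decay lemma applies since $\delta_{1}<1$ (and one may assume $\delta_{2}>\delta_{1}$, else the statement is vacuous), and the Markov--Borel--Cantelli step is exactly where the exponent $\tfrac{1}{2+\varepsilon_{1}}$ is spent: summability holds precisely when $\delta_{0}<\delta_{2}-\delta_{1}-\tfrac{1}{p}$, matching the hypothesis.

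There is, however, one genuine gap: the lemma assumes nothing about the integrability of the initial value $\mathbf{z}_{0}$ beyond it being $\mathbb{R}_{+}$-valued, so $u_{0}=(\mathbb{E}[\bar{\mathbf{z}}_{0}^{\,p}])^{1/p}$ may be infinite, in which case your moment recursion propagates $u_{t}=+\infty$ for all $t$ and Markov's inequality says nothing. The repair is the very device you already use for $V_{t}$: also truncate the initial condition, i.e.\ run the argument for the process started from $\mathbf{z}_{0}\wedge M$ (equivalently, work on $\{\mathbf{z}_{0}\leq M\}\cap\{\sup_{t}V_{t}\leq M\}$, on which the truncated path coincides with the true one), conclude a.s.\ convergence for each $M\in\mathbb{N}$, and intersect over $M$. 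With that addition --- plus the routine induction that each $u_{t}$ is then finite, so the conditioning steps are legitimate --- the proof is complete. Note also that your route (an $L^{p}$ moment recursion upgraded by Borel--Cantelli) is a self-contained alternative to the more pathwise style of argument typical of the consensus-plus-innovations literature from which the lemma is quoted, e.g.\ first taming $J_{t}$ by an almost-sure polynomial bound via Borel--Cantelli and then analyzing the products $\prod_{s}(1-r_{1}(s))$ directly; either way, the conclusion and the exponent accounting are the same.
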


We now consider the issue of convergence to consensus. 
Note that \eqref{eq:update-simple} may be expressed compactly as
\begin{equation} \label{eq:update}
\vx_{t+1} = \vx_t - \beta_t (L\otimes I_d)\vx_{t} - \alpha_t \left( \nabla \hat U(\vx_t) + \bxi_t \right)+ \gamma_t \vw_t,
\end{equation}
where $\hat U(\vx_t) = \sum_{n=1}^N U_n(\vx_n(t))$, $L$ denotes the graph Laplacian of $G$, and $\otimes$ denotes the Kronecker product.
The following lemma characterizes the growth rate of $\|\vx_t\|$.
\begin{lemma} \label{lemma:boundedness}
Suppose Assumptions \ref{ass:Lip}--\ref{ass:weights} hold and suppose $\{\vx_t\}$ satisfies \eqref{eq:update}. Then for every $\eta > 1/2$, it holds with probability 1 that
$$
\sup_{t\geq 1} \frac{\|\vx_t\|}{t^\eta} < \infty.
$$
\end{lemma}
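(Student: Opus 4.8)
The plan is to track the squared norm $\|\vx_t\|^2$, show it obeys a near-supermartingale recursion whose multiplicative drift coefficient is summable, extract a second-moment bound $\E[\|\vx_t\|^2] = O(\log t)$, and then convert this into the desired almost-sure polynomial bound via Borel--Cantelli.

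First I would rewrite \eqref{eq:update} by grouping the identity and consensus terms. Setting $P := L\otimes I_d$ and $A_t := I - \beta_t P$, we have $\vx_{t+1} = A_t\vx_t - \alpha_t(\nabla\hat{U}(\vx_t) + \bxi_t) + \gamma_t\vw_t$. Since $G$ is connected and undirected (Assumption \ref{ass:conn}), $P\succeq 0$ and the eigenvalues of $A_t$ lie in $[1-\beta_t\lambda_{\max}(P),1]$; because $\beta_t\to 0$, for all $t$ beyond some $t_0$ we have $\|A_t\|\le 1$, hence $\|A_t\vx_t\|\le\|\vx_t\|$. This grouping is essential: keeping $I-\beta_t P$ together, rather than bounding the consensus term on its own, avoids a spurious $\beta_t^2\|\vx_t\|^2$ contribution that would otherwise force a useless growth estimate of order $\exp(t^{1-2\tau_\beta})$.

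Next, expanding $\|\vx_{t+1}\|^2$ and taking $\E[\cdot\mid\calF_t]$, I would use three facts. (a) The cross term between the $\calF_t$-measurable vector $A_t\vx_t - \alpha_t\nabla\hat{U}(\vx_t)$ and the noise $-\alpha_t\bxi_t + \gamma_t\vw_t$ has zero conditional mean, since $\E[\bxi_t\mid\calF_t]=0$ (Assumption \ref{ass:grad_noise}) and $\vw_t$ is mean-zero and independent of $\calF_t$ (Assumption \ref{ass:gauss}); moreover $\E[\|{-\alpha_t\bxi_t+\gamma_t\vw_t}\|^2\mid\calF_t]\le 2\alpha_t^2 B + 2\gamma_t^2 Nd$. (b) The gradient inner product decomposes as $\langle\vx_t,\nabla\hat{U}(\vx_t)\rangle = \sum_n \langle\vx_n(t),\nabla U_n(\vx_n(t))\rangle \ge -C_2$ for a fixed constant $C_2$: each summand is nonnegative once $\|\vx_n(t)\|\ge C_1$ by Assumption \ref{ass:coercive}, and is bounded below on $\{\|\vx_n(t)\|<C_1\}$ via the Lipschitz estimate $\|\nabla U_n(x)\|\le \|\nabla U_n(0)\|+K\|x\|$ from Assumption \ref{ass:Lip}. (c) The same Lipschitz bound gives $\|\nabla\hat{U}(\vx_t)\|^2\le C_3(1+\|\vx_t\|^2)$, so the surviving deterministic terms are controlled by $C\alpha_t\beta_t(1+\|\vx_t\|^2) + C\alpha_t^2(1+\|\vx_t\|^2)$. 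Collecting terms yields, for $t\ge t_0$,
\[
\E[\|\vx_{t+1}\|^2\mid\calF_t] \le (1+c_t)\|\vx_t\|^2 + b_t,
\]
with $c_t = O(\alpha_t\beta_t + \alpha_t^2) = O(t^{-1-\tau_\beta}+t^{-2})$ and $b_t = O(\alpha_t + \gamma_t^2) = O(1/t)$. Under Assumption \ref{ass:weights}, $\sum_t c_t<\infty$, so $\Pi_t := \prod_{s<t}(1+c_s)$ converges to some $\Pi_\infty\in(0,\infty)$. Normalizing, $S_t := \|\vx_t\|^2/\Pi_t$ satisfies $\E[S_{t+1}\mid\calF_t]\le S_t + b_t$, whence $\E[\|\vx_t\|^2]\le \Pi_\infty\E[S_t] \le \Pi_\infty(\E[S_{t_0}] + \sum_{s<t} b_s) = O(\log t)$. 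Finally, for $\eta>1/2$ we have $2\eta>1$, so $\sum_t \P(\|\vx_t\|^2 > t^{2\eta}) \le \sum_t \E[\|\vx_t\|^2]/t^{2\eta} = O\!\left(\sum_t \log t / t^{2\eta}\right) <\infty$; Borel--Cantelli gives $\|\vx_t\|\le t^\eta$ eventually, and since $\max_{1\le t\le t_0}\|\vx_t\|/t^\eta<\infty$ almost surely, the claimed supremum is finite a.s.

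The main obstacle is step (b) together with the grouping in the first step. Since the paper deliberately forbids the bounded-gradient-dissimilarity condition, the negative drift of the gradient must be extracted instead from the individual coercivity and radial monotonicity of each $U_n$ (Assumption \ref{ass:coercive}), and the consensus operator must be retained as the contraction $I-\beta_t P$ so that the only surviving multiplicative perturbation $c_t$ is summable. Once these two points are handled, the remainder is a standard supermartingale-plus-Borel--Cantelli argument.
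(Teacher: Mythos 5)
Your proof is correct, and its first half coincides with the paper's own argument: you keep $I-\beta_t(L\otimes I_d)$ grouped so that the consensus term acts as a contraction, you eliminate the noise cross terms by conditioning on $\calF_t$, and you extract the uniform lower bound $\langle \vx_t,\nabla\hat U(\vx_t)\rangle \ge -C_2$ from Assumption \ref{ass:coercive} by splitting each agent's term according to whether $\|\vx_n(t)\|\ge C_1$ (your version, which retains the $\|\nabla U_n(0)\|$ term in the Lipschitz estimate, is in fact slightly more careful than the paper's, which tacitly drops it). Where you genuinely diverge is the conclusion. The paper fixes $\eta>1$, divides the recursion $\E(\|\vx_{t+1}\|^2\mid\calF_t)\le(1+3K\alpha_t\beta_t)\|\vx_t\|^2+2NKC_1$ by $(t+1)^\eta$, absorbs the factor $1+3K\alpha_t\beta_t$ into $\left(\frac{t+1}{t}\right)^\eta$ via Bernoulli's inequality, and observes that $V_t=\|\vx_t\|^2/t^\eta-\sum_{s\le t}\zeta_s$ is a nonnegative-drift supermartingale; Doob's maximal inequality then delivers $\sup_t\|\vx_t\|^2/t^\eta<\infty$ a.s. in one stroke. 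You instead normalize by the convergent product $\Pi_t=\prod_{s<t}(1+c_s)$, deduce the moment bound $\E\|\vx_t\|^2=O(\log t)$, and convert this to an almost-sure statement by Markov's inequality plus Borel--Cantelli, using $2\eta>1$ to make $\sum_t \log t/t^{2\eta}$ converge. Both routes are sound and of comparable length; yours yields an explicit second-moment growth estimate as a byproduct and avoids invoking a maximal inequality, while the paper's supermartingale argument bounds the pathwise supremum directly without passing through unconditional expectations. Two small points you should make explicit in a full write-up, both routine: integrability of $\|\vx_t\|^2$ for each fixed $t$ (needed to take total expectations in your recursion) follows by induction on $t$ from the deterministic initial condition, Assumption \ref{ass:grad_noise}, and Assumption \ref{ass:gauss}; and the random time after which Borel--Cantelli guarantees $\|\vx_t\|\le t^\eta$ is a.s. finite, so the maximum over the finitely many earlier times is a.s. finite, as you note.
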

\begin{proof}
Using \eqref{eq:update} and letting $Q = (L\otimes I_d)$ we have
\begin{align}
\|\vx_{t+1}\|^2 = &\|(I-\beta_t Q)\vx_t\|^2 + \alpha_t^2 \|\nabla \hat U(\vx_t)\|^2\\
& +\alpha_t^2\| \bxi_t\|^2\ + \gamma_t^2 \|\vw_t\|^2\\
& + 2\alpha_t \langle (I-\beta_tQ)\vx_t, -\nabla \hat U(\vx_t) - \bxi_t \rangle\\
& + 2\gamma_t \langle (I-\beta_t Q\vx_t), \vw_t\rangle\\
& + 2\alpha_t\gamma_t\langle -\nabla \hat U(\vx_t) - \bxi_t , \vw_t\rangle\\
& + 2\alpha_t^2\langle \nabla \hat U(\vx_t),\bxi_t \rangle.
\label{eq:lemma1-eq1}
\end{align}

Taking the conditional expectation with respect to $\mathcal{F}_t$ on both sides of \eqref{eq:lemma1-eq1} and using Assumptions \ref{ass:grad_noise}--\ref{ass:gauss} we have
\begin{align}
\E(\|\vx_{t+1}\|^2\vert\calF_t) = & \|(I-\beta_t Q)\vx_t\|^2 + \alpha_t^2 \|\nabla \hat{U}(\vx_t)\|^2\\
& + \alpha_t^2\E(\|\bxi_t\|^2\vert\calF_t) + \gamma_t^2\\
& -  2\alpha_t\langle (I-\beta_tQ)\vx_t, \nabla \hat U(\vx_t) \rangle \\
& \leq (1-\beta_t\lambdaMin)\|\vx_t\|^2 + \alpha_t^2 K\|\vx_t\|^2\\
& + \alpha_t^2B + \gamma_t^2 + 2 K \alpha_t \beta_t \| \vx_t \|^2 \\
&  - 2 \alpha_t \langle \vx_t, \nabla \hat{U}(\vx_t) \rangle,
\label{eq:lemma1-eq2}
\end{align}
where, to obtain the inequality, we repeatedly apply Assumption \ref{ass:Lip} and $\lambdaMin$ denotes the smallest eigenvalue of $Q$ (in this case 0). We proceed by lower bounding $\langle \vx_t, \nabla \hat{U}(\vx_t) \rangle$. We can expand the inner product as $$
\langle \vx_t, \nabla \hat{U}(\vx_t) \rangle = \sum\limits_{i = 1}^N \langle \vx_i(t), \nabla U_i(\vx_i(t)) \rangle.
$$
If $\|\vx_i(t)\| \ge C_1$, then by Assumption \ref{ass:coercive}, $\langle \vx_i(t), \nabla U_i(\vx_i(t)) \rangle \ge 0$. Else, Assumption \ref{ass:Lip} implies $$
\langle \vx_i(t), \nabla U_i(\vx_i(t)) \rangle \ge - K \| \vx_i(t) \|^2 \ge - K C_1.
$$
It follows that $\langle \vx_t, \nabla \hat{U}(\vx_t) \rangle \ge - N K C_1$, so we obtain from \eqref{eq:lemma1-eq2} that for $t$ sufficiently large, 
\begin{equation}\label{eq:lemma1-eq3}
\E \left( \| \vx_{t+1} \|^2 \right) \le (1 + 3 K \alpha_t \beta_t ) \| \vx_t \|^2  + 2N K C_1.
\end{equation} 
Fix $\eta > 1$. Dividing both sides of \eqref{eq:lemma1-eq3} by $(t+1)^\eta$ we have
\begin{equation} \label{eq:almost-SM}
\frac{\E(\|\vx_{t+1}\|^2\vert\calF_t)}{(t+1)^\eta} \leq
(1+3 K \alpha_t \beta_t) \cdot \left( \frac{t}{t+1} \right)^\eta \frac{\|\vx_t\|^2}{t^\eta} + \frac{2NKC_1}{(t+1)^\eta}.
\end{equation}
For $t$ large,
\begin{equation}
1 + 3 K \alpha_t \beta_t \le 1 + \frac{\eta}{t} \le \left( \frac{t+1}{t} \right)^\eta,
\end{equation}
where the second inequality is a consequence of Bernoulli's inequality as $\eta \ge 1$. Substituting the above in \eqref{eq:almost-SM} gives
\begin{equation} \label{eq:almost-SM-2}
\E \left( \frac{ \| \vx_{t+1} \|^2}{ (t + 1)^\eta } \right) \le \frac{ \| \vx_t \|^2 }{ t^\eta } + \frac{ 2NKC_1 }{(t+1)^\eta}.
\end{equation}
Let $\zeta_t = \frac{2NKC_1}{t^\eta}$,  let
$$
V_t = \frac{\|\vx_t\| ^2}{t^\eta}- \sum_{s=1}^t \zeta_s,
$$
and note that $\sum_{s=1}^\infty \zeta_s < \infty$.
By \eqref{eq:almost-SM} we have
\begin{align}
\E(V_{t+1}\vert\calF_t) & = \E\left(\frac{\|\vx_{t+1}\|^2}{(t + 1)^\eta}\vert\calF_{t} \right) - \sum_{s=1}^{t+1} \zeta_s\\
& \leq \frac{\|\vx_{t}\|^2}{t^\eta} - \sum_{s=1}^{t} \zeta_s
= V_{t},
\end{align}
so that $\{V_t\}$ is a supermartingale. By Doob's supermartingale inequality \cite{williams1991probability}, we have $\P \left(\sup_{t\geq 1} V_t \geq c \right) \leq \frac{\E(V_1)}{c}, $
for any $c>0$. Sending $c\to\infty$ we see that $\P \left(\sup_{t\geq 1} V_t < \infty \right) = 1.$
Since $\sum_{t\geq 1} \zeta_t < \infty$, this implies that $\P\left(\sup_{t\geq 1} \frac{\|\vx_t\|^2}{t^\eta} < \infty \right) =0,$
which is equivalent to the desired result.
\end{proof}

The following lemma establishes that agents achieve asymptotic consensus.
\begin{lemma} \label{lemma-consensus}
Suppose Assumptions \ref{ass:Lip}--\ref{ass:weights} hold. Then with probability 1, for every $\tau\in[0,1/2-\tau_\beta)$ there holds
$$
\lim_{t\to\infty} t^\tau \|\vx_n(t) - \bar \vx(t)\| =0.
$$
\end{lemma}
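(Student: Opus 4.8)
The plan is to track the consensus error in stacked form and reduce it to a scalar recursion of the type handled by Lemma \ref{lm:mean-conv}. Writing $J = \frac{1}{N}\ones\ones^{\top}$ and $P = (I_N - J)\otimes I_d$ for the orthogonal projection onto the disagreement subspace, I set $\vx_t^{\perp} := P\vx_t$, which stacks the vectors $(\vx_n(t) - \bar\vx(t))_{n=1}^N$; since $\|\vx_n(t) - \bar\vx(t)\| \le \|\vx_t^{\perp}\|$ for each $n$, it suffices to prove $t^{\tau}\|\vx_t^{\perp}\| \to 0$ a.s. for every $\tau \in [0, 1/2 - \tau_\beta)$. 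Because $L\ones = 0$, the matrix $Q = L\otimes I_d$ annihilates the consensus component and leaves the disagreement subspace invariant, so applying $P$ to \eqref{eq:update} gives $\vx_{t+1}^{\perp} = (I - \beta_t Q)\vx_t^{\perp} - \alpha_t P(\nabla\hat U(\vx_t) + \bxi_t) + \gamma_t P\vw_t$. On the disagreement subspace the smallest eigenvalue of $Q$ is the algebraic connectivity $\lambda_2 > 0$ (positive by Assumption \ref{ass:conn}), so for $t$ large enough that $\beta_t$ is small, $I - \beta_t Q$ is a contraction there with factor $1 - \beta_t\lambda_2$. Taking norms and using that $P$ is nonexpansive yields, with $\mathbf{z}_t := \|\vx_t^{\perp}\|$,
$$
\mathbf{z}_{t+1} \le (1 - \beta_t\lambda_2)\mathbf{z}_t + \alpha_t\|\nabla\hat U(\vx_t)\| + \alpha_t\|\bxi_t\| + \gamma_t\|P\vw_t\|.
$$

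I would then match this to Lemma \ref{lm:mean-conv} by setting $r_1(t) = \beta_t\lambda_2 = c_\beta\lambda_2\, t^{-\tau_\beta}$, so $\delta_1 = \tau_\beta$, and bounding the three forcing terms by a single $r_2(t)V_t(1 + J_t)$. The Gaussian term is cleanest: with $J_t := \|P\vw_t\|$, which is i.i.d., independent of $\calF_t$, and has finite moments of all orders, $\gamma_t\|P\vw_t\| \le c_\gamma t^{-1/2} J_t$. The real difficulty, and the whole point of relaxing bounded gradient dissimilarity, is the gradient drift $\alpha_t\|\nabla\hat U(\vx_t)\|$: previously this was $O(\alpha_t)$, but now Assumption \ref{ass:Lip} only gives $\|\nabla\hat U(\vx_t)\| \le C + K\|\vx_t\|$, which grows with the state. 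Here I invoke Lemma \ref{lemma:boundedness}: fixing any $\eta > 1/2$, the random variable $M := \sup_t \|\vx_t\|/t^{\eta}$ is finite a.s., so $\alpha_t\|\nabla\hat U(\vx_t)\| \le c_\alpha(C + K)(1+M)\, t^{\eta - 1}$, a rate of $t^{-(1-\eta)}$ that is slower than $t^{-1/2}$ but can be pushed arbitrarily close to it by taking $\eta \downarrow 1/2$.

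To fit everything into one bounded adapted factor I would define $V_t := 1 + \sup_{s\le t}\|\vx_s\|/s^{\eta} + \sup_{s\le t} s^{-\eta}\|\bxi_s\|$, which is $\calF_{t+1}$-measurable and $\mathbb{R}_+$-valued. It is bounded a.s.: the first supremum is finite by Lemma \ref{lemma:boundedness}, and for the second I note that Assumption \ref{ass:grad_noise} gives $\E\|\bxi_s\|^2 \le B$, so $\sum_s \E[s^{-2\eta}\|\bxi_s\|^2] \le B\sum_s s^{-2\eta} < \infty$ because $2\eta > 1$; hence $\sum_s s^{-2\eta}\|\bxi_s\|^2 < \infty$ a.s. and in particular $s^{-\eta}\|\bxi_s\| \to 0$, making the second supremum finite a.s. as well. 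This is the device that lets the limited-moment gradient noise be absorbed into the deterministic-coefficient part: since $\alpha_t\|\bxi_t\| = c_\alpha\, t^{-(1-\eta)} \cdot t^{-\eta}\|\bxi_t\| \le c_\alpha\, t^{-(1-\eta)} V_t$, it carries no $J_t$ factor and its moment bound is never needed beyond second order. Combining the three estimates and using $V_t \ge 1$ and $t^{-1/2} \le t^{-(1-\eta)}$, one obtains $\alpha_t\|\nabla\hat U(\vx_t)\| + \alpha_t\|\bxi_t\| + \gamma_t\|P\vw_t\| \le a_2\, t^{-(1-\eta)} V_t (1 + J_t)$ for a suitable constant $a_2$, i.e. $r_2(t) = a_2\, t^{-(1-\eta)}$ and $\delta_2 = 1-\eta$.

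With these identifications Lemma \ref{lm:mean-conv} yields $(t+1)^{\delta_0}\mathbf{z}_t \to 0$ a.s. for every $\delta_0 < (1-\eta) - \tau_\beta - \frac{1}{2+\varepsilon_1}$, and since $J_t$ is Gaussian the exponent $\varepsilon_1$ may be taken arbitrarily large. Thus, given any target $\tau \in [0, 1/2 - \tau_\beta)$, I would choose $\eta \in (1/2, 1 - \tau - \tau_\beta)$ — a nonempty interval precisely because $\tau + \tau_\beta < 1/2$ — and $\varepsilon_1$ large enough that $\tau < (1-\eta) - \tau_\beta - \frac{1}{2+\varepsilon_1}$, which makes $\tau$ an admissible $\delta_0$ and gives $t^{\tau}\|\vx_t^{\perp}\| \to 0$; the per-agent bound follows. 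The main obstacle throughout is the unbounded gradient drift: everything hinges on Lemma \ref{lemma:boundedness} supplying an $o(t^{\eta})$ growth bound for every $\eta > 1/2$, which — after optimizing over $\eta$ — is exactly enough to recover the same consensus rate $1/2 - \tau_\beta$ that held under the stronger bounded-dissimilarity assumption.
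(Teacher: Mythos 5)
Your proposal is correct and follows essentially the same route as the paper: isolate the disagreement component of the state (you via the projection $(I_N - \frac{1}{N}\ones\ones^{\top})\otimes I_d$, the paper via an orthonormal diagonalization of $L\otimes I_d$), control the unbounded gradient drift using the $t^{\eta}$ growth bound of Lemma \ref{lemma:boundedness}, absorb the gradient noise and Gaussian terms into the $V_t(1+J_t)$ factor, and invoke Lemma \ref{lm:mean-conv} with $\delta_1=\tau_\beta$, $\delta_2=1-\eta$, letting $\eta\downarrow 1/2$ and $\varepsilon_1\to\infty$. Your bookkeeping differs only cosmetically (you fold the entire gradient term into the forcing sequence rather than absorbing $\alpha_t K\|\vx_t^\perp\|$ into the contraction coefficient, and you use an $L^2$-summability argument in place of Borel--Cantelli for $\bxi_t$), and both variants are valid.
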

\begin{proof}
Let $R$ be an orthonormal matrix diagonalizing $L\otimes I_d$. Let $Q = R^T(L\otimes I_d)R$ and without loss of generality assume that the diagonal entries of $Q$ are arranged so that $Q = \text{diag}(\textbf{0},\hat Q)$, where $\hat Q\in \R^{(N-1)d\times(N-1)d}$ is positive definite. (Since the graph $G$ is connected, the nullspace of $L\otimes I_d$ has dimension $d$.) Letting $\vy_t = R\vx_t$, the recursion \eqref{eq:update} is equivalent to
\begin{equation} \label{eq:gen-recursion}
\vy_{t+1} = \vy_t - \beta_t Q\vy_t - \alpha_t (\nabla h(\vy_t) + \bxi_t) + \gamma_t \vw_t,
\end{equation}
$h(y) := U(R^{T}y)$ and $\bxi_t$ and $\vw_t$ satisfy Assumptions \ref{ass:grad_noise}--\ref{ass:gauss}.
Let $\vy_t$ be decomposed as
$$
\vy_t =
\begin{pmatrix}
\bar \vy_t\\
\vy_t^\perp
\end{pmatrix},
$$
where $\bar \vy_t \in \R^d$ and $\vy_t^\perp \in \R^{(N-1)d}$.
From \eqref{eq:gen-recursion} we have
$$
\vy_{t+1}^\perp  = \vy_{t}^\perp -\beta_t \hat Q\vy_t^\perp -\alpha_t \nabla h^\perp(\vy_t) - \alpha_t\bxi_t^\perp + \gamma_t\vw_t^\perp,
$$
By Assumption \ref{ass:Lip} we have $\|\nabla h(\vy_t)\| \leq C_2+ K\|\vy_t||$ for some $C_2 \ge 0$. Hence, letting $\eta\in(1/2,1 - \tau_\beta)$ we have
\begin{align}
\|\vy_{t+1}^\perp\| & \leq \|\vy_t^\perp - \beta_t \hat Q\vy_t^\perp\| +  \alpha_t \|\nabla h^\perp(\vy_t)\|\\
 & + \alpha_t \|\bxi_t^\perp\|+ \gamma_t\|\vw_t^\perp\|\\
\leq & \|(I-\beta_t \hat Q)\vy_t^\perp\| +  \alpha_t K(\|\vy_t^\perp\| + \|\bar \vy_t\|) + \alpha_t C_2\\
& + \alpha_t \|\bxi_t^\perp\| + \gamma_t\|\vw_t^\perp\|\\
\leq &(1-\beta_t \hat Q)\|\vy_t^\perp\| +  \alpha_t K\|\vy_t^\perp\| + \alpha_t C_2\\
& + \alpha_t Kt^{\eta} + \alpha_t\|\bxi_t^\perp\| + \gamma_t\|\vw_t^\perp\|\\
\leq & (1-\bar \beta_t)\|\vy_t^\perp\| + \alpha_t K t^{\eta} + \alpha_t C_2\\
& + \alpha_t \|\bxi_t^\perp\| + \gamma_t\|\vw_t^\perp\|
\end{align}
where $\bar\beta_t = \beta_t \lambdaMin - \alpha_tK$, $\lambdaMin$ is the smallest eigenvalue of $\hat Q$, and the third inequality follows from Lemma \ref{lemma:boundedness}.
Using the Borel-Cantelli Lemma \cite{williams1991probability}, it is straightforward to verify that $\frac{1}{t} \|\bxi_t\| \to 0$ as $t\to\infty$, almost surely (see, e.g., \cite{swenson2019annealing}, Lemma 1). Moreover, by Assumption \ref{ass:gauss}, $\|\vw_t\|$ possesses moments of all order. Thus, there exist $\{\calF_t\}$ adapted processes $V_t$ and $J_t$ such that
$$
\alpha_tK t^{\eta}+\alpha_t \|\bxi_t^\perp\| + \gamma_t\|\vw_t^\perp\| \leq t^{-(1 -\eta) }V_t(1+J_t),
$$
where $\sup_{t\geq 1} V_t < \infty$ almost surely, and $J_t$ possesses moments of all orders. Substituting this into the previous chain of inequalities we have
$$
\|\vy_{t+1}\| \leq (1-\bar \beta_t)\|\vy_t^\perp\| + t^{-(1 - \eta)}V_t(1+J_t),
$$
which fits the template of Lemma \ref{lm:mean-conv} since $1 - \eta > \tau_\beta$. By Lemma \ref{lm:mean-conv} we have $t^{\tau}\|\vy_t\| \to 0$ for every $\tau \in[0,1 - \eta -\tau_\beta)$. This holds for $\eta$ arbitrarily close to $\frac{1}{2}$, so the desired result follows. 
\end{proof}

Theorem \ref{thrm:main-result} now follows immediately from Lemma \ref{lemma-consensus} above and the following Lemma from \cite{swenson2019annealing}.\footnote{We remark that in Lemma 5 in \cite{swenson2019annealing} it is assumed that the agents' utilities satisfy a bounded-gradient-dissimilarity condition (Assumption 2 in \cite{swenson2019annealing}). This is only required in the proof Lemma 5 of \cite{swenson2019annealing} to ensure consensus is reached. Since consensus is reached under alternate assumptions here, this assumption is not needed.}
\begin{lemma}[\hspace{-.01em}\cite{swenson2019annealing}, Lemma 5] \label{lemma-conv-to-min}
Let $\{\vx_t\}$ satisfy the recursion \eqref{eq:update} and let $\{\bar \vx_t\}$ be given by \eqref{eq:x-bar-def}
with initial condition $x_1\in \R^d$. Let Assumptions \ref{ass:Lip}--\ref{ass:weights} hold.
Further, suppose that $c_{\alpha}$ and $c_{\gamma}$ in Assumption~\ref{ass:weights} satisfy, $c_{\gamma}^{2}/c_{\alpha}>C_{0}$, where $C_0$ is defined after Assumption~6 in \cite{swenson2019annealing}. Then, for any bounded continuous function $f:\mathbb{R}^{d}\to\mathbb{R}$, we have that $\lim_{t\to\infty} \E(f(\bar \vx_t)\vert \bar \vx_1 = x_1) = \int f(x)d\pi(x), $
where $\pi$ is as defined in Assumption \ref{ass:GM_1}.
\end{lemma}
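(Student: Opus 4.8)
The plan is to reduce the statement to the convergence theory for centralized annealing recursions established in \cite{gelfand1991recursive,chiang1987diffusion}. First I would average the recursion \eqref{eq:update} over the $N$ agents by applying the operator $\frac{1}{N}(\ones^T\otimes I_d)$. Since $\ones$ spans the nullspace of the graph Laplacian $L$, the consensus term $\beta_t(L\otimes I_d)\vx_t$ is annihilated, and writing $\sum_{n}\nabla U_n(\vx_n(t)) = \nabla U(\bar\vx_t) + e_t$ with $e_t := \sum_{n=1}^N(\nabla U_n(\vx_n(t))-\nabla U_n(\bar\vx_t))$, one obtains
$$
\bar\vx_{t+1} = \bar\vx_t - \frac{\alpha_t}{N}\nabla U(\bar\vx_t) + \gamma_t\bar\vw_t - \frac{\alpha_t}{N}e_t - \alpha_t\bar\bxi_t,
$$
where $\bar\vw_t := \frac1N\sum_{n=1}^N\vw_n(t)$ and $\bar\bxi_t := \frac1N\sum_{n=1}^N\bxi_n(t)$. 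This is exactly the centralized simulated-annealing recursion $\bar\vx_{t+1} = \bar\vx_t - \frac{\alpha_t}{N}\nabla U(\bar\vx_t)+\gamma_t\bar\vw_t$ plus the two perturbations $-\frac{\alpha_t}{N}e_t$ (the consensus error) and $-\alpha_t\bar\bxi_t$ (the averaged gradient noise). By Assumption \ref{ass:gauss} the vector $\bar\vw_t$ is Gaussian with covariance $\frac{1}{N}I_d$, so the effective diffusion-to-drift ratio is proportional to $\gamma_t^2/\alpha_t$; this is why the threshold hypothesis $c_\gamma^2/c_\alpha>C_0$ is precisely the centralized one.

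Next I would show that the two perturbations are asymptotically negligible at the scale dictated by the annealing schedule. For the consensus error, Assumption \ref{ass:Lip} gives $\|e_t\|\le K\sum_{n}\|\vx_n(t)-\bar\vx_t\|$, while Lemma \ref{lemma-consensus} guarantees $\|\vx_n(t)-\bar\vx_t\| = o(t^{-\tau})$ for every $\tau<1/2-\tau_\beta$; combined with $\alpha_t=c_\alpha/t$ this yields $\tfrac{\alpha_t}{N}\|e_t\| = o(t^{-1-\tau})$, which decays strictly faster than the drift and diffusion increments. For the gradient noise, Assumption \ref{ass:grad_noise} makes $\{\bar\bxi_t\}$ a martingale-difference sequence with $\E[\|\bar\bxi_t\|^2\mid\calF_t]\le B/N$ by Jensen's inequality, so $\alpha_t\bar\bxi_t$ is a centered, suitably square-summably-weighted perturbation. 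Both terms therefore fall within the class of vanishing perturbations tolerated by the Gelfand--Mitter framework, and I would verify the exact decay and summability conditions that framework requires.

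Finally I would invoke the weak-convergence theorem of \cite{gelfand1991recursive}, whose geometric hypotheses are supplied here by Assumptions \ref{ass:GM_1}--\ref{ass:GM_3} (in particular the radial alignment condition with constant $C(d)$ and the linear growth bounds on $\nabla U$), to conclude that $\bar\vx_t$ converges in distribution to the Gibbs limit $\pi$, establishing \eqref{lm:avg_conv1} for the average. The hard part will \emph{not} be the reduction itself, but verifying that the perturbed recursion inherits the same weak limit as the unperturbed one: the Gelfand--Mitter argument tracks the law of the iterate against the instantaneous Gibbs measure $\pi^{\varepsilon_t}$ via generator and spectral-gap estimates, and requires the cooling to be slow enough for this tracking to persist. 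The delicate point is ensuring that $e_t$ and $\bar\bxi_t$ meet precisely the decay conditions under which this tracking is robust; here the strict positivity $1/2-\tau_\beta>0$ coming from Lemma \ref{lemma-consensus} is exactly what furnishes the extra polynomial decay needed to absorb the consensus error into the error budget of the centralized result.
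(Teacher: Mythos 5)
The paper never proves this lemma: it is imported verbatim as Lemma 5 of \cite{swenson2019annealing}, and the only paper-internal content is the footnote observing that the bounded-gradient-dissimilarity assumption used in that reference is needed only to force consensus, which the present paper re-derives by other means (Lemma \ref{lemma-consensus}). So there is no internal proof to compare against; what your sketch does is reconstruct the argument that the paper outsources. Your reconstruction follows exactly the intended route: hitting \eqref{eq:update} with $\frac{1}{N}(\ones^{T}\otimes I_d)$ annihilates the Laplacian term, the averaged iterate obeys the centralized Gelfand--Mitter recursion with drift $\frac{\alpha_t}{N}\nabla U(\bar\vx_t)$ and Gaussian noise of covariance $\frac{\gamma_t^2}{N}I_d$, and the residuals---the consensus error $e_t$ (controlled through Assumption \ref{ass:Lip} plus the $o(t^{-\tau})$ rate of Lemma \ref{lemma-consensus}, which is precisely the role the footnote assigns to that lemma) and the averaged gradient noise $\bar\bxi_t$ (a martingale difference with conditional second moment at most $B/N$)---enter as vanishing perturbations. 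Your remark that drift and diffusion are both rescaled by $1/N$, so the threshold condition $c_\gamma^2/c_\alpha>C_0$ is the same as the centralized one, is also correct and explains why the hypothesis of the lemma carries over unchanged.

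The one caution concerns where the real work lies. The weak-convergence theorem of \cite{gelfand1991recursive} is stated for the unperturbed recursion; there is no off-the-shelf perturbation-robust version one can simply invoke, and establishing that the limit $\pi$ survives the extra terms $-\frac{\alpha_t}{N}e_t$ and $-\alpha_t\bar\bxi_t$ (re-running the tracking of the law of $\bar\vx_t$ against the instantaneous Gibbs measures with these terms present) is precisely the technical content of Lemma 5 in \cite{swenson2019annealing}. Your proposal correctly identifies this as the delicate point but defers it, so as a standalone proof it is a sound skeleton whose analytic core coincides with the cited result rather than replacing it. Within the architecture of this paper that is harmless---the lemma is meant to be cited, not re-proved---but the final step should not be presented as a routine verification of decay and summability conditions.
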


\bibliographystyle{IEEEtran}
\bibliography{dist_glob_opt}

\begin{thebibliography}{10}
\providecommand{\url}[1]{#1}
\csname url@samestyle\endcsname
\providecommand{\newblock}{\relax}
\providecommand{\bibinfo}[2]{#2}
\providecommand{\BIBentrySTDinterwordspacing}{\spaceskip=0pt\relax}
\providecommand{\BIBentryALTinterwordstretchfactor}{4}
\providecommand{\BIBentryALTinterwordspacing}{\spaceskip=\fontdimen2\font plus
\BIBentryALTinterwordstretchfactor\fontdimen3\font minus
  \fontdimen4\font\relax}
\providecommand{\BIBforeignlanguage}[2]{{%
\expandafter\ifx\csname l@#1\endcsname\relax
\typeout{** WARNING: IEEEtran.bst: No hyphenation pattern has been}%
\typeout{** loaded for the language `#1'. Using the pattern for}%
\typeout{** the default language instead.}%
\else
\language=\csname l@#1\endcsname
\fi
#2}}
\providecommand{\BIBdecl}{\relax}
\BIBdecl

\bibitem{di2016next}
P.~Di~Lorenzo and G.~Scutari, ``Next: In-network nonconvex optimization,''
  \emph{IEEE Transactions on Signal and Information Processing over Networks},
  vol.~2, no.~2, pp. 120--136, 2016.

\bibitem{nedic2009distributed}
A.~Nedic and A.~Ozdaglar, ``Distributed subgradient methods for multi-agent
  optimization,'' \emph{IEEE Transactions on Automatic Control}, vol.~1,
  no.~54, pp. 48--61, 2009.

\bibitem{rabbat2004distributed}
M.~Rabbat and R.~Nowak, ``Distributed optimization in sensor networks,'' in
  \emph{{Proceedings of the International Symposium on Information Processing
  in Sensor Networks}}, 2004, pp. 20--27.

\bibitem{murray2019revisiting}
R.~Murray, B.~Swenson, and S.~Kar, ``Revisiting normalized gradient descent:
  Fast evasion of saddle points,'' \emph{IEEE Transactions on Automatic
  Control}, vol.~PP, pp. 1--1, 2019.

\bibitem{lee2016gradient}
J.~D. Lee, M.~Simchowitz, M.~I. Jordan, and B.~Recht, ``Gradient descent only
  converges to minimizers,'' in \emph{Conference on Learning Theory}, 2016, pp.
  1246--1257.

\bibitem{lee2017first}
J.~D. Lee, I.~Panageas, G.~Piliouras, M.~Simchowitz, M.~I. Jordan, and
  B.~Recht, ``First-order methods almost always avoid strict saddle points,''
  \emph{Mathematical Programming}, pp. 1--27, 2019.

\bibitem{dauphin2014identifying}
Y.~N. Dauphin, R.~Pascanu, C.~Gulcehre, K.~Cho, S.~Ganguli, and Y.~Bengio,
  ``Identifying and attacking the saddle point problem in high-dimensional
  non-convex optimization,'' in \emph{{Advances in Neural Information
  Processing Systems (NIPS)}}, 2014, pp. 2933--2941.

\bibitem{bianchi2012convergence}
P.~Bianchi and J.~Jakubowicz, ``Convergence of a multi-agent projected
  stochastic gradient algorithm for non-convex optimization,'' \emph{IEEE
  Transactions on Automatic Control}, vol.~58, no.~2, pp. 391--405, 2012.

\bibitem{swenson2019distributed}
B.~Swenson, R.~Murray, H.~V. Poor, and S.~Kar, ``Distributed gradient descent:
  Nonconvergence to saddle points and the stable-manifold theorem,'' 2019, to
  appear in Proceedings of Allerton Conference on Communication, Control, and
  Computing, arXiv preprint arXiv:1908.02747.

\bibitem{sun2016distributed}
Y.~Sun, G.~Scutari, and D.~Palomar, ``Distributed nonconvex multiagent
  optimization over time-varying networks,'' in \emph{Proceedings of Asilomar
  Conference on Signals, Systems and Computers}, 2016, pp. 788--794.

\bibitem{lu2019pa}
S.~Lu, M.~Hong, and Z.~Wang, ``Pa-gd: On the convergence of perturbed
  alternating gradient descent to second-order stationary points for structured
  nonconvex optimization,'' in \emph{International Conference on Machine
  Learning}, 2019, pp. 4134--4143.

\bibitem{tatarenko2017non}
T.~Tatarenko and B.~Touri, ``Non-convex distributed optimization,'' \emph{IEEE
  Transactions on Automatic Control}, vol.~62, no.~8, pp. 3744--3757, 2017.

\bibitem{welling2011bayesian}
M.~Welling and Y.~W. Teh, ``Bayesian learning via stochastic gradient
  {L}angevin dynamics,'' in \emph{Proceedings of the International Conference
  on Machine Learning}, 2011, pp. 681--688.

\bibitem{gidas1985global}
B.~Gidas, ``Global optimization via the {L}angevin equation,'' in
  \emph{Proceedings of IEEE Conference on Decision and Control}, 1985, pp.
  774--778.

\bibitem{chiang1987diffusion}
T.-S. Chiang, C.-R. Hwang, and S.~J. Sheu, ``Diffusion for global optimization
  in $\mathbb{R}^n$,'' \emph{SIAM Journal on Control and Optimization},
  vol.~25, no.~3, pp. 737--753, 1987.

\bibitem{gelfand1991recursive}
S.~B. Gelfand and S.~K. Mitter, ``Recursive stochastic algorithms for global
  optimization in $\mathbb{R}^{d}$,'' \emph{SIAM Journal on Control and
  Optimization}, vol.~29, no.~5, pp. 999--1018, 1991.

\bibitem{zhang2017hitting}
Y.~Zhang, P.~Liang, and M.~Charikar, ``A hitting time analysis of stochastic
  gradient {L}angevin dynamics,'' in \emph{Conference on Learning Theory},
  2017, pp. 1980--2022.

\bibitem{tzen2018local}
B.~Tzen, T.~Liang, and M.~Raginsky, ``Local optimality and generalization
  guarantees for the {L}angevin algorithm via empirical metastability,'' in
  \emph{Conference on Learning Theory}, 2018, pp. 857--875.

\bibitem{gao2018breaking}
X.~Gao, M.~Gurbuzbalaban, and L.~Zhu, ``Breaking reversibility accelerates
  {L}angevin dynamics for global non-convex optimization,'' \emph{arXiv
  preprint arXiv:1812.07725}, 2018.

\bibitem{raginsky2017non}
M.~Raginsky, A.~Rakhlin, and M.~Telgarsky, ``Non-convex learning via stochastic
  gradient {L}angevin dynamics: a nonasymptotic analysis,'' in \emph{Conference
  on Learning Theory}, 2017, pp. 1674--1703.

\bibitem{xu2018global}
P.~Xu, J.~Chen, D.~Zou, and Q.~Gu, ``Global convergence of {L}angevin dynamics
  based algorithms for nonconvex optimization,'' in \emph{Advances in Neural
  Information Processing Systems}, 2018, pp. 3122--3133.

\bibitem{swenson2019annealing}
B.~Swenson, S.~Kar, H.~V. Poor, and J.~M. Moura, ``Annealing for distributed
  global optimization,'' 2019, to appear in Proceedings of IEEE Conference on
  Decision and Control, arXiv preprint https://arxiv.org/abs/1903.07258.

\bibitem{williams1991probability}
D.~Williams, \emph{Probability with martingales}.\hskip 1em plus 0.5em minus
  0.4em\relax Cambridge University Press, 1991.

\bibitem{kar2013distributed}
S.~Kar, J.~M. Moura, and H.~V. Poor, ``Distributed linear parameter estimation:
  Asymptotically efficient adaptive strategies,'' \emph{SIAM Journal on Control
  and Optimization}, vol.~51, no.~3, pp. 2200--2229, 2013.

\end{thebibliography}

\end{document}